\newtheorem{Conjecture}{Conjecture}
\newtheorem{lem}{Lemma}[section]
\newtheorem{teo}[lem]{Theorem}
\newtheorem{pro}[lem]{Proposition}
\newtheorem{claim}[lem]{Claim}
\newtheorem{rem}[lem]{Remark}
\newtheorem*{Claim*}{Claim}
\newtheorem*{rem*}{Remark}
\newtheorem*{teo*}{Theorem}
\newcounter{claimcounter}
\numberwithin{claimcounter}{lem}
\newcommand{\OO}{\mathcal O}
\DeclareMathOperator{\proj}{proj}
\DeclareMathOperator{\Fix}{Fix}
\DeclareMathOperator{\Tr}{Tr}
\DeclareMathOperator{\GL}{GL}
\DeclareMathOperator{\im}{Im}
\DeclareMathOperator{\rk}{rk}
\DeclareMathOperator{\Stab}{Stab}
\DeclareMathOperator{\Mat}{Mat}
\newcommand{\R}{\mathbb{R}}
\newcommand{\F}{\mathbb{F}}
\newcommand{\N}{\mathbb{N}}
\newcommand{\CC}{\mathbb{C}}
\newcommand{\Q}{\mathbb{Q}}
\date{April 2023}
\subjclass[2010]{Primary: 20C07, Secondary:  22D25, 46L10, 47A58}
\keywords{$L^2$-Betti numbers, twisted $L^2$-Betti numbers, sofic groups, sofic approximation}
\title{Twisted $L^2$-Betti numbers for sofic groups}
\author{ Jan Boschheidgen}
\author{Andrei Jaikin-Zapirain}
\address{Departamento de Matem\'aticas, Universidad Aut\'onoma de Madrid \and  Instituto de Ciencias Matem\'aticas, CSIC-UAM-UC3M-UCM}
\email{jan.boschheidgen@uam.es}
\email[Corresponding author]{andrei.jaikin@uam.es}
\begin{document}

	\begin{abstract}  For a given group $G$, Wolfgang L\"uck asked whether twisting a chain complex of finitely generated free
		$\CC[G]$-modules with a finite dimensional complex representation $V$ of $G$ before passing
		to the $L^2$-completion has no other effect on $L^2$-Betti numbers than a scaling by the
		factor $\dim_{\CC} V$. The purpose of the article  is to answer this question
		affirmatively provided $G$ is sofic. \end{abstract}
	
	\maketitle

	\section{Introduction}
	Given a group $G$ we denote by $\rk_G$ the von Neumann Sylvester matrix rank function of the group algebra  $\CC[G]$. Although $\rk_G$ is defined in an analytic way (we recall its definition in Subsection \ref{prelim:l2}), it may be characterized algebraically for some groups (see for, example, the case of locally indicable groups \cite[Corollary 6.2]{JL20}). Therefore,  we think that it is  plausible that in general there is also a pure algebraic way to define $\rk_G$.  If this is true, then $\rk_G$ should be ``rigid" under natural algebraic manipulations. For example, in \cite{Ja19}, the following conjecture was proposed.
	\begin{Conjecture}[The independence conjecture]\label{indep} Let $G$ be a  group. Let $K$ be a field and let $\phi_1,\phi_2:K\to \CC$ be  two embeddings of $K$ into $\CC$. Then for every  matrix $A\in \Mat_{n\times m}(K[G]) $ 
		$$\rk_G(\phi_1(A))=\rk_G(\phi_2(A)).$$
	\end{Conjecture}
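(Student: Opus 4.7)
The plan is to reduce the conjecture to elementary linear algebra by transporting the rank computation from $\CC[G]$ to a family of honest finite-dimensional matrix algebras, using a sofic approximation of $G$. This is natural given the setting of the paper; outside the sofic world the conjecture remains open.

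First, a cheap reduction: the matrix $A$ involves only finitely many field elements and finitely many group elements, so one can replace $K$ by the finitely generated, hence countable, subfield of $K$ generated by the coefficients of the entries of $A$. The two embeddings $\phi_1,\phi_2$ then identify $K$ with two countable subfields $K_1,K_2\subseteq\CC$, related by an abstract isomorphism $\psi=\phi_2\circ\phi_1^{-1}\colon K_1\to K_2$ that carries $\phi_1(A)$ entrywise to $\phi_2(A)$.

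Next, fix a sofic approximation $(\sigma_i\colon G\to \mathrm{Sym}(V_i))_{i\in\N}$ of $G$ together with a non-principal ultrafilter $\omega$ on $\N$. Since permutation matrices have integer entries, linear extension gives for every commutative ring $R$ a functorial map $\tilde\sigma_i\colon \Mat_{n\times m}(R[G])\to \Mat_{n|V_i|\times m|V_i|}(R)$, and the equality $\tilde\sigma_i(\phi_j(A))=\phi_j(\tilde\sigma_i(A))$ holds entrywise, with $\tilde\sigma_i(A)\in\Mat_{n|V_i|\times m|V_i|}(K)$. At this point the argument rests on the sofic rank formula
\[
\rk_G(B)=\lim_\omega \frac{1}{|V_i|}\,\rk_\CC(\tilde\sigma_i(B))
\]
for matrices $B\in\Mat(\CC[G])$ with entries supported on a countable subfield of $\CC$. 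This is the sofic analogue of L\"uck's approximation theorem and is the main technical obstacle. For entries in $\overline{\Q}[G]$ it is essentially Elek's theorem, but the extension to arbitrary countable subfields will presumably require the machinery developed in the bulk of the paper and is intimately connected to the positive answer to L\"uck's question on twisted $L^2$-Betti numbers.

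Granted the above formula the conclusion is immediate. The ordinary rank of a matrix over a field is determined by the vanishing of its minors and is therefore invariant under arbitrary field isomorphisms; hence
\[
\rk_\CC(\phi_j(\tilde\sigma_i(A)))=\rk_{\phi_j(K)}(\phi_j(\tilde\sigma_i(A)))=\rk_K(\tilde\sigma_i(A)),
\]
a quantity independent of $j\in\{1,2\}$. Plugging this into the sofic rank formula yields
\[
\rk_G(\phi_1(A))=\lim_\omega \frac{1}{|V_i|}\,\rk_K(\tilde\sigma_i(A))=\rk_G(\phi_2(A)),
\]
as required. In summary, all the content sits in the sofic approximation step; the field-invariance of rank over an arbitrary field is essentially a tautology.
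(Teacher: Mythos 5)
Your argument is correct and is exactly what the paper means when it says that the sofic case of the independence conjecture follows immediately from the sofic L\"uck approximation; the remaining steps (passing to a finitely generated subfield, commuting $\phi_j$ past the finite-dimensional evaluation, and invariance of matrix rank under field isomorphisms and field extensions) are all sound. Your only hedge --- whether the sofic rank formula is available for all of $\CC[F]$ rather than just $\overline\Q[F]$ --- is unfounded: that is precisely Theorem \ref{soficluck}, quoted from \cite[Theorem 1.3]{Ja19}, and it is an input to the present paper, not one of its results.
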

	This conjecture was proved for sofic groups in \cite{Ja19} and for locally indicable groups in \cite{JL20}.
	
	In this paper we consider a similar problem. Given a representation $\sigma:G\to \GL_k(\CC)$, we   define $\tilde \sigma: \CC[G]\to \Mat_k(\CC[G])$ by sending $g\in G$ to $\sigma(g)g$ and then extending by linearity. The following conjecture is a rephrasing of a question raised by L\"uck in \cite[Question 0.1]{Lu18}.
	\begin{Conjecture}[The L\"uck twisted conjecture]\label{twist} Let $G$ be a  group and $\sigma:G\to \GL_k(\CC)$ a homomorphism.  Then for every  matrix $A\in \Mat_{n\times m}(\CC[G])$
		$$\rk_G( \tilde \sigma(A))=k\cdot  \rk_G(A).$$
	\end{Conjecture}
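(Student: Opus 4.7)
The plan is to prove the conjecture for sofic groups by promoting a sofic approximation of $G$ to one that sees the twist $\sigma$, and then running the sofic rank formula on both sides of the desired equality. Fix a sofic approximation $u_i\colon G\to U(n_i)$; for $B\in \Mat(\CC[G])$, write $B(u_i)$ for the matrix obtained by replacing each group element $g$ appearing in an entry of $B$ by $u_i(g)$. The sofic rank formula asserts
$$\rk_G(B)=\lim_{i}\frac{1}{n_i}\rk\bigl(B(u_i)\bigr),$$
and, in the form we need, remains valid for any sequence $\rho_i\colon G\to \Mat_{N_i}(\CC)$ which is uniformly bounded on each finite subset of $G$, approximately multiplicative in the normalised Hilbert--Schmidt norm, and whose normalised traces converge pointwise to the regular trace of $\CC[G]$.

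Define $\rho_i\colon G\to \Mat_{kn_i}(\CC)$ by $\rho_i(g)=\sigma(g)\otimes u_i(g)$. Since $A$ involves only finitely many elements of $G$, the operator norms $\|\sigma(g)\|$ are uniformly bounded on the relevant finite set, hence so are those of $\rho_i$. Approximate multiplicativity follows from
$$\rho_i(g)\rho_i(h)-\rho_i(gh)=\sigma(gh)\otimes\bigl(u_i(g)u_i(h)-u_i(gh)\bigr)$$
together with the tensor-multiplicativity of the normalised Hilbert--Schmidt norm. The trace condition holds because
$$\frac{1}{kn_i}\Tr\bigl(\rho_i(g)\bigr)=\frac{1}{k}\Tr\bigl(\sigma(g)\bigr)\cdot\frac{1}{n_i}\Tr\bigl(u_i(g)\bigr)\longrightarrow \delta_{g,e}.$$
Applying the generalised sofic rank formula to $\rho_i$ yields $\rk_G(A)=\lim_{i}\tfrac{1}{kn_i}\rk\bigl(A(\rho_i)\bigr)$.

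After a canonical permutation of rows and columns, the matrix $A(\rho_i)$ coincides with $\tilde\sigma(A)(u_i)$, because both assemble exactly the same data out of the $k\times k$ blocks $\sigma(g)$ and the $n_i\times n_i$ blocks $u_i(g)$, merely indexed in different orders. Hence
$$\rk_G\bigl(\tilde\sigma(A)\bigr)=\lim_{i}\frac{1}{n_i}\rk\bigl(\tilde\sigma(A)(u_i)\bigr)=\lim_{i}\frac{1}{n_i}\rk\bigl(A(\rho_i)\bigr)=k\cdot\rk_G(A),$$
as claimed.

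The main obstacle is the extension of the sofic rank formula to the non-unitary approximation $\rho_i$: the classical statements give the formula for permutation or unitary approximations, whereas $\rho_i$ may have operator norm strictly larger than $1$. The finiteness of the set of group elements in $A$ gives the required uniform bound on the finite set that matters, but one still has to verify that uniformly bounded, approximately multiplicative, trace-converging sequences compute the same $\rk_G$. This is an Elek--Szabo type argument, close in spirit to what is carried out for the independence conjecture in \cite{Ja19}, and is the only substantive technical input beyond the formal tensor-product manipulations above.
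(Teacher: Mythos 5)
The formal tensor identity $A(\rho_i)=\tilde\sigma(A)(u_i)$ (up to a permutation of rows and columns) is correct, and the verification of the uniform bound and trace condition for $\rho_i(g)=\sigma(g)\otimes u_i(g)$ is fine. The gap is the ``generalised sofic rank formula'' you invoke. You state it as if it were a routine Elek--Szab\'o-type extension, but in fact there is no theorem asserting that a uniformly bounded, approximately multiplicative, trace-converging sequence $\rho_i\colon F\to \Mat_{N_i}(\CC)$ computes $\rk_G$. The sofic L\"uck approximation of \cite{Ja19} (Theorem~\ref{soficluck}) is specifically about \emph{permutation} approximations $\rho_{X_i}$: its proof hinges on a determinant/mahler-measure bound for the matrices $\rho_{X_i}(B)$, which is available precisely because permutation matrices keep the entries integral (or, after the $\overline\Q\to\CC$ density step, algebraic of bounded height). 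Tensoring with $\sigma$ destroys this: $\rho_i$ is no longer a permutation representation, its entries are arbitrary complex numbers, and no bound on the spectral mass of $A(\rho_i)^*A(\rho_i)$ near $0$ is available. Trace convergence and operator-norm boundedness alone do not control the rank, because rank is not continuous. What you call ``the only substantive technical input'' is in fact the whole difficulty, and it is at least as hard as (and likely harder than) what it is supposed to prove.

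The paper deliberately never leaves the permutation world. Instead of tensoring $X_i$ with $\sigma$, it \emph{enlarges} the sofic approximation to $Y_i=X_i\times F/N_i$, where $N_i=\ker(\sigma\bmod P_i)$ is a finite-index kernel obtained after reducing $\sigma$ modulo a maximal ideal $P_i$ of a suitable ring of $S$-integers. On $Y_i$, every point-stabiliser lies in $N_i$ and hence acts trivially through $\sigma\bmod P_i$; Lemma~\ref{isomtwo} then shows that the twisted action over the residue field $\F_i$ is \emph{isomorphic} to $k$ copies of the untwisted action, giving the exact identity $\rk_{Y_i,P_i}(\tilde\sigma(B))=k\cdot\rk_{Y_i,P_i}(B)$ (Claim~\ref{finite}). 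The rest is bookkeeping: Claim~\ref{constantC} bounds $|\rk_{Y_i}-\rk_{Y_i,P_i}|$ by $O(1/\log|\F_i|)$ using height/torsion estimates, Theorem~\ref{soficluck} identifies $\lim_i\rk_{Y_i}$ with $\rk_G$, and the $\overline\Q\to\CC$ step uses density of algebraic points (Theorem~\ref{dense}) together with the semicontinuity statements in Claims~\ref{Kazhdan} and~\ref{le}. So the ``trivialisation of the twist'' happens at the level of finite permutation models over finite fields, not by passing to complex matrix approximations. Your approach would need a L\"uck-approximation theorem for general $\GL_N(\CC)$-valued approximations of a sofic group, which is not established and which the paper is structured precisely to avoid.
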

	L\"uck noticed that a positive solution of this conjecture for a group $G$ would allow to calculate $L^2$-Betti numbers of some fibrations of connected finite $CW$-complexes $F\to E\to B$ where  $\pi_1(B)\cong G$ and the map $\pi_1(E)\to \pi_1(B)$ induced by the fibration is an isomorphism.  More details on this application can be found in \cite[Subsection 5.2] {Lu18} and \cite[Section 4]{KS21}. Conjecture \ref{twist} was proved for torsion-free elementary amenable groups in \cite{Lu18} and for locally indicable groups in \cite{KS21}. In our main result  we prove the conjecture for sofic groups.
	
	\begin{teo} \label {twistedsofic}  The L\"uck twisted conjecture holds for sofic groups.  \end{teo}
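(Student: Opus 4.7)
The plan is to compute both $\rk_G(A)$ and $\rk_G(\tilde\sigma(A))$ as limits of complex matrix ranks arising from a single sofic-type approximation of $G$ that incorporates the representation $\sigma$. First I fix a sofic approximation $(\pi_r\colon G\to S_{n_r})_{r\in\N}$ of $G$ and a non-principal ultrafilter $\omega$ on $\N$. Writing $P_{\pi_r(g)}\in\Mat_{n_r}(\CC)$ for the permutation matrix of $\pi_r(g)$ and extending $\pi_r$ entry-wise and $\CC$-linearly to matrices over $\CC[G]$, the standard sofic characterization of $\rk_G$ yields
\[
\rk_G(A)=\lim_{r\to\omega}\tfrac{1}{n_r}\rank_\CC(\pi_r(A)),\qquad \rk_G(\tilde\sigma(A))=\lim_{r\to\omega}\tfrac{1}{n_r}\rank_\CC(\pi_r(\tilde\sigma(A))).
\]

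I then couple $\sigma$ with the sofic data by defining $\tau_r\colon G\to\GL_{kn_r}(\CC)$ by $\tau_r(g)=\sigma(g)\otimes P_{\pi_r(g)}$, extended $\CC$-linearly and entry-wise. An elementary index computation shows that $\pi_r(\tilde\sigma(A))$ agrees with $\tau_r(A)$ as a complex matrix of size $nkn_r\times mkn_r$, so the theorem reduces to establishing
\[
\lim_{r\to\omega}\tfrac{1}{kn_r}\rank_\CC(\tau_r(A))=\rk_G(A).
\]

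The sequence $(\tau_r)$ is an asymptotically multiplicative, trace-preserving $\CC$-linear approximation of $\CC[G]$ into the tracial matrix algebras $(\Mat_{kn_r}(\CC),\tfrac{1}{kn_r}\tr)$: asymptotic multiplicativity follows from the exact multiplicativity of $\sigma$ and the soficity of $(\pi_r)$, while trace-preservation is the computation $\tfrac{1}{kn_r}\tr(\tau_r(g))=\tfrac{1}{k}\tr(\sigma(g))\cdot \tfrac{|\Fix\pi_r(g)|}{n_r}\to\delta_{g,e}$, using that $|\Fix\pi_r(g)|/n_r\to 0$ for $g\neq e$. Consequently $(\tau_r)$ induces a trace-preserving embedding of $\CC[G]$ into the tracial ultraproduct $\prod_\omega\Mat_{kn_r}(\CC)$, and a rigidity argument for the Sylvester rank function on $\CC[G]$ (valid because $G$ is sofic) gives the required identity.

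The main technical obstacle is justifying precisely this rigidity step: because $(\tau_r)$ takes values in invertible complex matrices rather than permutations, it is not itself a sofic approximation and one must argue it still recovers $\rk_G$. I expect this to proceed either by exhibiting $(\tau_r)$ as a permutation approximation of a larger sofic group---for instance the diagonal copy of $G$ in $G\times\sigma(G)$, using that finitely generated subgroups of $\GL_k(\CC)$ are residually finite and hence sofic---or by invoking the algebraic rigidity of $\rk_G$ from \cite{Ja19}: any trace-preserving homomorphism from $\CC[G]$ into a tracial von Neumann algebra extends to a rank-preserving embedding of the group von Neumann algebra, so the rank function induced by $(\tau_r)$ must coincide with $\rk_G$.
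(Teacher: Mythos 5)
Your setup is correct: identifying $\pi_r(\tilde\sigma(A))$ with $\tau_r(A)$ where $\tau_r(g)=\sigma(g)\otimes P_{\pi_r(g)}$, the Sofic L\"uck approximation does reduce the theorem to showing $\lim_{r\to\omega}\tfrac{1}{kn_r}\rank_\CC(\tau_r(A))=\rk_G(A)$. But you have correctly located, and then not closed, the actual content of the theorem: justifying that this non-sofic approximation recovers $\rk_G$. Neither of the two fixes you sketch works. The ``diagonal copy of $G$ in $G\times\sigma(G)$'' is simply $G$ again (first projection is an isomorphism onto it), so this gives no new sofic group to approximate; and more fundamentally, $\tau_r$ takes values in $\GL_{kn_r}(\CC)$ rather than in permutation matrices, so it is not a sofic approximation of anything. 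The rigidity statement you attribute to \cite{Ja19} --- that any trace-preserving homomorphism from $\CC[G]$ into a tracial von Neumann algebra induces the rank $\rk_G$ --- is not proved there, and it cannot apply here even in principle: since $\sigma$ is not unitary, $\tau_r(g^{-1})=\sigma(g)^{-1}\otimes P_{\pi_r(g)}^{-1}\neq\tau_r(g)^*$, so the limit map is not a $*$-homomorphism, and the von Neumann rank in the tracial ultraproduct (being built from projections onto closures of ranges, i.e.\ from the $*$-structure) simply has no reason to match up with the normalized trace data you verified. Trace-preservation alone does not control rank.

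The paper takes an essentially different route precisely to circumvent this. It first reduces to the case where $A$ and $\sigma$ are defined over $\overline\Q$, via a density argument (Tarski--Seidenberg) combined with a Kazhdan-type semicontinuity and a specialization inequality for $\rk_G$ obtained from the Sofic L\"uck approximation. In the algebraic case it does not work over $\CC$ at all: it reduces $\sigma$ modulo a sequence of primes $P_i$ of a ring of $S$-integers, obtaining representations $\sigma_i$ of $F$ with large kernels $N_i$ of finite index, and enlarges the sofic approximation to $Y_i=X_i\times F/N_i$. The point is that the stabilizers in $F$ of points of $Y_i$ then lie in $\ker\sigma_i$, which is exactly the hypothesis needed to ``untwist'' the module $\F_i^k\otimes_{\F_i}\F_i[Y_i]$ (Lemma~\ref{isomtwo}), giving the identity $\rk_{Y_i,P_i}(\tilde\sigma(B))=k\cdot\rk_{Y_i,P_i}(B)$ exactly, not asymptotically. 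It then controls the gap between the characteristic-$p$ ranks $\rk_{Y_i,P_i}$ and the characteristic-$0$ ranks $\rk_{Y_i}$ by an explicit $O(1/\log|\F_i|)$ bound. This mod-$p$ detour is the mechanism that absorbs a non-unitary (even faithful) $\sigma$ into a permutation approximation; your proposal has no replacement for it, which is why the rigidity step is left as an unsubstantiated claim.
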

	We introduce the notion of sofic group in Subsection \ref{prelim:sofic}. In particular,   amenable and residually finite groups are sofic.

	Notice that the sofic case of  the independence conjecture follows  
	immediately from the sofic L\"uck  approximation  proved in \cite{Ja19} (see Theorem \ref{soficluck}). 
	However, in the case of the L\"uck twisted conjecture, this implication is not so direct.

	Let us outline the   proof of  Theorem \ref{twistedsofic}.   The equality is shown by approximation techniques in two different
	ways: the multiplication operator is approximated using the ``almost" $G$-sets of a
	sofic approximation of $G$ and by approximating the complex matrix coefficients by
	algebraic numbers.
	
	In more detail, we assume first  that  $$\sigma:G\to  \GL_k(\overline \Q) \textrm{\ and\ } A\in \Mat_{n\times m}(\overline \Q[G]).$$
	We may moreover assume that $G$ is finitely generated so that  the image of $\sigma$ is determined by finitely
	many matrices which together with $A$ have all coefficients in a ring of $S$-integers  $\mathcal O_{K,S}$  in some number field $K$. This ring can be approximated by finite fields
	$\F_i = \mathcal O_{K,S}/P_i$ where $P_i$ runs trough different maximal ideals (so that necessarily
	$|\F_i|\to \infty$). An arbitrary sofic approximation $\{X_i\}$ of $G = F/N$ is now replaced by a sofic
	approximation $\{Y_i\}$ such that any $f\in F$ fixing a point of $Y_i$ also lies in the kernel of
	the composition $$F\to G\to  \GL_k(O_{K,S})\to \GL_k(\F_i).$$  Consider a matrix  $B$   over $\Q[F]$ that maps to  $A$ under the canonical homomorphism $F\to G$. 
	By abuse of notation we denote by $\sigma$ also the composition $F \to G \to \GL_k(O_{K,S})$ and by $\tilde{\sigma}$ the map
		$\CC[F] \to \Mat_k(\CC[F])$ defined as before for $\CC[G]$. It is an elementary
	consideration that
	$$\rk_{Y_i, P_i} (\widetilde \sigma (B)) = k\cdot  \rk_{Y_i, P_i} (B)$$
	holds for the associated finite analogues $\rk_{Y_i, P_i}$. Using previously developed
	methods from \cite{Ja19} and the sofic L\"uck approximation over $\overline \Q$, we
	show that the equality passes to $\rk_G$ instead of $\rk_{Y_i, P_i}$.
	
	For general complex coefficients, we  specialize any possibly occurring transcendental
	elements to nearby algebraic numbers with the same algebraic dependencies
	and show that the $\rk_G$ is ``continuous" with respect to this operation. To do so,  we use that $\overline \Q$-points lie dense (in the Euclidean topology) in complex algebraic
	varieties defined over $\overline \Q$, and deduce an approximation statement for $\rk_G$ from
	weak convergence of spectral measures, similarly as one does to obtain Kazhdan's
	inequality. The final argument involves the sofic L\"uck approximation over $\CC$.

	\subsection*{Acknowledgments} This paper is partially supported by the grants  of the Ministry of Science and Innovation of Spain (reference numbers MTM2017-82690-P and PID2020-114032GB-I00)   and by the ICMAT Severo
	Ochoa project  CEX2019-000904-S4.  We are deeply grateful to an anonymous referee for enhancing the introduction of the article and providing valuable suggestions. Additionally, we would like to express our thanks to Holger Kammeyer for bringing the reference \cite{BG08} to our attention.
	
	\section{Preliminaries}
	
	\subsection{General notation} 
	The linear operators on vector spaces will act on the right. If $\mathcal H$ is a Hilbert space, $U(\mathcal H)$  denotes the group of unitary operators on $\mathcal H$ and $\mathcal B(\mathcal H)$ is the algebra of bounded operators.
	
	In this paper the action of a group on a set is on the right side. If $F$ acts on $X$ and $w\in F$ then  $\Fix_{X}(w)$ denotes the set of fixed points of $w$ in $X$.
	
	We will use the language of Sylvester matrix rank functions. They are maps from the set of matrices over a ring to the set of non-negative real numbers. For a precise definition, see \cite[Section 5]{Ja19b}.
	
	\subsection{Von Neumann Sylvester matrix rank function}\label{prelim:l2}
	In this subsection we recall the definition of the von Neumann Sylvester matrix rank function $\rk_G$ on $\CC[G]$. 
	
	First, consider $G$ to be countable. Let $\rho_G:G\to U(l^2(G))$  and $\lambda_G: G\to U(l^2(G))$ be the right and left  regular representation of $G$, respectively: 
	$$\left (\sum_{h\in G} a_h h \right )\rho_G(g)=\sum_{h\in G} a_h hg,\ \left (\sum_{h\in G} a_h h \right )\lambda_G(g)=\sum_{h\in G} a_h g^{-1}h\ (a_h\in \CC, g\in G).$$
	By linearity we can extend $\rho_G$ and $\lambda_G$ to the homomorphisms $$\rho_G,\lambda_G:  \CC[G]\to \mathcal B(l^2(G)).$$
	
	A  finitely generated {\bf Hilbert}  $G$-module is a closed subspace $V\le l^2(G)^n$,  which is invariant under the   actions of  elements of $\lambda_G(G)$.  We denote by  $\proj_{V}: l^2(G)^n\to  l^2(G)^n$ the orthogonal projection onto $V$ and we define $$\dim_{G}V:=\Tr_{G}(\proj_{V}):=\sum_{i=1}^n\langle ( \mathbf 1_i)\proj_{V},\mathbf 1_i\rangle_{(l^2(G))^n},$$
	where $\mathbf 1_i $ is the element of $ l^2(G)^n$ having 1 in the $i$th entry and $0$ in the rest of the entries.     The number $\dim_{G}V $ is the {\bf von Neumann dimension} of $V$.  
	
	Let   $A \in \Mat_{n\times m}(\CC[G])$ be a matrix over $\CC[G]$. 
	The action of $A$ by  right multiplication on $l^2(G)^n$  induces   a bounded linear operator  $\rho_{G}(A):l^2(G)^n\to l^2(G)^m$.  Now we can define the {\bf von Neumann Sylvester matrix rank function} $\rk_G$: 
	$$\rk_G(A):=\dim _G \overline{\im   \rho_{G}(A)}=n-\dim _G \ker  \rho_{G}(A) .$$
	If $G=F/N$ is a quotient of a group $F$ and $A\in \Mat_{n\times m}(\CC[F])$ is a matrix over $\CC[F]$, by abuse of notation, we will also write $\rk_G(A)$ instead of $\rk_G(\overline A)$, where $\overline A$ is the image of $A$ in $\Mat_{n\times m}(\CC[F/N])$. 
	
	If $G$ is not a countable  group then $\rk_G$ is defined as follows. Take a matrix $A$ over $\CC[G]$. Then the group elements that appear in $A$ are contained in  a finitely generated group $H$. We will put $\rk_G(A)=\rk_H(A)$. One easily checks that the value $\rk_H(A)$ does not depend on the subgroup $H$.
	
	\subsection{Sofic groups and the sofic L\"uck approximation}
	\label{prelim:sofic}

	Let  $F$ be a free finitely generated group and assume that it is freely generated by a set $S$. 
	An element $w$ of $F$ has {\bf length} $n$ if $w$ can be expressed as a product of $n$ elements from $S\cup S^{-1}$ and $n$ is the smallest number with this property. We denote by $B_n(1)$ the set of  elements of length at most $n$.
	
	Let $N$ be a normal subgroup of $F$. We put $G=F/N$. We say that $G$ is {\bf sofic} if there is a family $\{X_k:\ k\in\N\}$ of  finite $F$-sets   such that for any $w\in F$,
	$$\displaystyle \lim_{k\to \infty} \frac{|\Fix_{X_k}(w)|}{|X_k|}=1 \textrm{\ if \ } w\in N \textrm{\ and \ }\displaystyle \lim_{k\to \infty}  \frac{|\Fix_{X_k}(w)|}{|X_k|}=0  \textrm{\  if \ } w\not \in N.$$ 
	The  family of $F$-sets $\{X_k\}$  is called a {\bf sofic approximation} of $F/N$.  For an arbitrary group $G$ we say that $G$ is {\bf sofic} if every   finitely generated subgroup of $G$ is sofic. Amenable groups and residually finite groups are sofic. At this moment,  no nonsofic group is known.
	
	Let   $B \in \Mat_{n\times m}(\CC[F])$ be a  matrix over $\CC[F]$. 
	By   multiplication on the right  side, $B$ induces   a  linear operator  $\rho_{X_k}(B):l^2(X_k)^n\to l^2(X_k)^m$. We put 
	$$ \rk_{X_k}(B):=\frac{\dim_{\CC} \im \rho_{X_k}(B)}{|X_k|}.$$
	The sofic L\"uck approximation is the following result.
	\begin{teo} \cite[Theorem 1.3]{Ja19} \label{soficluck}   Let $\{X_k\}$ be a sofic approximation of $G=F/N$. Then for every  $B\in  \Mat_{n\times m}( \CC[F])$,   
		$$\displaystyle \lim_{k\to\infty} \rk_{X_k}(B)=\rk_{G}(B).$$   \end{teo}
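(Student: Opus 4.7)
The plan is to recast both ranks as spectral mass away from zero and then transfer convergence of moments from soficity. Set $C := BB^* \in \Mat_n(\CC[F])$; the operators $\rho_{X_k}(C)$ and $\rho_G(C)$ are positive and self-adjoint with the same kernels as $\rho_{X_k}(B)$ and $\rho_G(B)$, so it suffices to study $C$. Let $\mu_k$ be the spectral measure of $\rho_{X_k}(C)$ with respect to the normalized trace $\tau_k := |X_k|^{-1}\Tr$ on $\Mat_n(\CC[X_k])$, and $\mu$ the spectral measure of $\rho_G(C)$ with respect to the natural trace on $\Mat_n$ of the group von Neumann algebra of $G$. Both are supported in $[0,R]$ for $R := \|B\|_{l^1}^2$, a constant independent of $k$, both have total mass $n$, and one has $\rk_{X_k}(B) = \mu_k((0,\infty))$ and $\rk_G(B) = \mu((0,\infty))$. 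The goal thus becomes $\mu_k(\{0\}) \to \mu(\{0\})$.

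For each $p \in \N$, the $p$th moment $\int t^p\, d\mu_k(t) = \tau_k(\rho_{X_k}(C^p))$ unpacks to a fixed $\CC$-linear combination $\sum_{w \in F_0} c_w\, |\Fix_{X_k}(w)|/|X_k|$, where $F_0 \subset F$ is finite and the coefficients $c_w$ depend on $C$ and $p$ but not on $k$. By the defining property of a sofic approximation, $|\Fix_{X_k}(w)|/|X_k| \to \mathbf{1}_N(w)$, which is precisely the value of the group trace on $\rho_G(w)$, so moments converge to $\int t^p\, d\mu(t)$. Since supports are uniformly bounded, moment convergence lifts to weak convergence $\mu_k \to \mu$. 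The portmanteau theorem applied to the closed set $\{0\}$ yields $\limsup_k \mu_k(\{0\}) \leq \mu(\{0\})$, i.e.\ $\liminf_k \rk_{X_k}(B) \geq \rk_G(B)$.

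The reverse inequality needs a uniform-in-$k$ control on how much mass $\mu_k$ can concentrate just above zero. Suppose first that $B$ has entries with coefficients in the ring of integers $\mathcal O_K$ of a number field $K$. Then the characteristic polynomial of $\rho_{X_k}(C)$ is monic of degree $n|X_k|$ with algebraic-integer coefficients of height controlled by $\|B\|$ alone, and a standard Mahler-/Lehmer-type height bound on the nonzero eigenvalues produces a constant $D$ with
$$\int_{(0,R]} \log t\, d\mu_k(t) \geq -D \quad \text{uniformly in } k.$$
Since $\log t \leq \log\epsilon < 0$ on $(0,\epsilon)$ for $\epsilon < 1$, this forces $\mu_k((0,\epsilon)) \leq (D + n\log R)/|\log\epsilon|$ uniformly in $k$. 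Combined with $\mu_k([\epsilon,\infty)) \to \mu([\epsilon,\infty))$ at continuity points $\epsilon$, writing $\mu_k(\{0\}) = n - \mu_k((0,\epsilon)) - \mu_k([\epsilon,\infty))$ and sending $\epsilon \to 0$ yields $\liminf_k \mu_k(\{0\}) \geq \mu(\{0\})$.

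The main obstacle is removing the algebraic-coefficient hypothesis. Entrywise approximation of $B$ by matrices $B^{(r)}$ over $\bar\Q[F]$ is not in itself enough, because $\rk_G$ and $\rk_{X_k}$ are only upper semicontinuous in the entries at the atom at zero. Following the strategy of \cite{Ja19}, I would work with the regularized log-determinants
$$\psi_\epsilon(B) := -\int \log(t+\epsilon)\, d\mu(t), \qquad \psi_\epsilon^{(k)}(B) := -\int \log(t+\epsilon)\, d\mu_k(t),$$
which are jointly continuous in the entries of $B$ and in $\epsilon > 0$, satisfy $\psi_\epsilon(B) \to n - \rk_G(B)$ as $\epsilon \to 0$, and for which the algebraic case supplies control of the speed of convergence uniform in $r$ and $k$. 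A careful triple diagonalization in $r, k, \epsilon$ then transfers the estimate from $B^{(r)}$ to $B$ and closes the argument.
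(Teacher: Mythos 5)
This theorem is imported from \cite{Ja19}; the present paper cites it without reproving it, so there is no in-paper argument to compare against, only the proof in the source. Judged on its own terms, your first two blocks follow the standard L\"uck--Elek--Szab\'o template correctly: passing to $C=BB^*$, realizing $\rk_{X_k}(B)$ and $\rk_G(B)$ as non-atomic spectral mass, moment convergence from $|\Fix_{X_k}(w)|/|X_k|\to\mathbf 1_N(w)$, the portmanteau inequality giving $\limsup_k\mu_k(\{0\})\le\mu(\{0\})$, and the integrality/norm determinant bound for $\overline\Q$-coefficients giving the reverse inequality. One formula slip: $\psi_\epsilon(B)$ does \emph{not} tend to $n-\rk_G(B)$ as $\epsilon\to 0$; it diverges to $+\infty$ whenever $\rk_G(B)<n$. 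The correct statement is $\psi_\epsilon(B)/|\log\epsilon|\to n-\rk_G(B)$.

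The genuine gap is in your last paragraph, which is precisely the part where \cite{Ja19} goes beyond the earlier literature. You assert that ``the algebraic case supplies control of the speed of convergence uniform in $r$ and $k$.'' It does not. The bound $\int_{(0,R]}\log t\,d\mu_k^{(r)}\ge-D_r$ for an algebraic approximant $B^{(r)}$ is obtained from $|N_{K_r/\Q}(\cdot)|\ge 1$ applied to a nonzero $\mathcal O_{K_r}$-coefficient of the characteristic polynomial, and the resulting constant $D_r$ unavoidably involves the degree $[K_r:\Q]$ and the size of \emph{all} Galois conjugates of the entries of $B^{(r)}$ (this is exactly what the $\lceil\cdot\rceil$ quantity in Claim~\ref{constantC} of the paper tracks). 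For a transcendental entry $t$ of $B$, any sequence of algebraic approximants $s^{(r)}\to t$ must have degree or house tending to infinity --- there are only finitely many algebraic numbers of bounded degree and bounded conjugates --- so $D_r\to\infty$ and there is no uniform-in-$r$ estimate to feed into a diagonalization over $(r,k,\epsilon)$. Your argument thus collapses exactly at the step it was invoked to handle; this is why the passage from $\overline\Q$ to $\CC$ coefficients is a theorem rather than a routine limiting argument. The mechanism in \cite{Ja19} (and, facing the same obstruction, in Section~3.2 of the present paper) is different in kind: it replaces the unattainable two-sided uniform determinant bound by one-sided semicontinuity --- density of algebraic points in the relevant variety, a Kazhdan-type inequality giving $\liminf_i\rk_G(C(s_i))\ge\rk_G(C)$, and a separate monotonicity argument for the reverse inequality --- so that no determinant estimate across approximants is ever needed.
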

	
	This result has its origin in the paper of L\"uck \cite{Lu94} (see \cite{Ja19b} for more details). 
		
	\section{The proof of the main result}
	
	In this section we  prove Theorem \ref{twistedsofic}. As we have explained in the introduction we consider first the case where the matrix $A$ and the homomorphism $\sigma$ are realized over algebraic numbers. This will be done in the first subsection. In the second subsection we prove the general case.
	
	\subsection{The algebraic case}  In this subsection we prove the following result.
	\begin{teo}\label{algebraic}
		Let $G$ be a  sofic group and $\sigma:G\to \GL_k(\overline \Q)$ a homomorphism.  Then for every  matrix $A\in \Mat_{n\times m}(\overline \Q[G])$
		$$\rk_G( \tilde \sigma(A))=k\cdot  \rk_G(A).$$
	\end{teo}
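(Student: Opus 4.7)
My plan is to use the sofic L\"uck approximation (Theorem \ref{soficluck}) combined with reduction modulo a well-chosen maximal ideal of $\mathcal O_{K,S}$, which turns $\sigma$ into a representation with finite image where the twist can be trivialized by an explicit linear change of coordinates.

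After reducing to the case where $G$ is finitely generated with a presentation $G = F/N$, $F$ free of finite rank, I would choose a number field $K$ and a finite set of places $S$ such that $\mathcal O := \mathcal O_{K,S}$ contains the coefficients of $A$ and $\sigma(G) \le \GL_k(\mathcal O)$. Lift $A$ to $\tilde A \in \Mat(\mathcal O[F])$ and view $\sigma$ as a representation of $F$ via the quotient $F \to G$. Fix any sofic approximation $\{X_j\}$ of $G$. For each maximal ideal $\mathfrak m \subset \mathcal O$ the reduction $\bar\sigma_\mathfrak m \colon F \to H_\mathfrak m \le \GL_k(\mathcal O/\mathfrak m)$ has finite image; I would then form the product $F$-set $Y_j^\mathfrak m := X_j \times H_\mathfrak m$ with right action $(x,h) \cdot f = (xf,\, h\bar\sigma_\mathfrak m(f))$. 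A short fixed-point count, using $N \subseteq \ker(F \to H_\mathfrak m)$, shows that $\{Y_j^\mathfrak m\}_j$ remains a sofic approximation of $G$ for every choice of $\mathfrak m$.

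The key observation is that, over $\mathbb F := \mathcal O/\mathfrak m$, the twist by $\bar\sigma_\mathfrak m$ can be trivialized by the $\mathbb F$-linear isomorphism
$$T \colon \mathbb F[Y_j^\mathfrak m]^n \otimes \mathbb F^k \longrightarrow \mathbb F[Y_j^\mathfrak m]^n \otimes \mathbb F^k, \qquad \mathbf{1}_{(x,h)} e_i \otimes w \longmapsto \mathbf{1}_{(x,h)} e_i \otimes w h^{-1},$$
which makes sense because $h \in H_\mathfrak m \subseteq \GL_k(\mathbb F)$. A direct computation using $(h\bar\sigma_\mathfrak m(f))^{-1} = \bar\sigma_\mathfrak m(f)^{-1} h^{-1}$ shows that $T$ intertwines the action of $\widetilde{\bar\sigma_\mathfrak m}(\tilde A \bmod \mathfrak m)$ with the action of $(\tilde A \bmod \mathfrak m) \otimes \mathrm{id}_{\mathbb F^k}$, yielding the exact identity
$$\rk^{\mathbb F}_{Y_j^\mathfrak m}\bigl(\widetilde{\bar\sigma_\mathfrak m}(\tilde A \bmod \mathfrak m)\bigr) = k \cdot \rk^{\mathbb F}_{Y_j^\mathfrak m}(\tilde A \bmod \mathfrak m).$$

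To transport this identity back to characteristic zero, for each $j$ one should choose $\mathfrak m_j$ so that reduction modulo $\mathfrak m_j$ preserves the $K$-rank of both $\tilde A$ and $\tilde\sigma(\tilde A)$ viewed as $\mathcal O$-matrices acting on $\mathcal O[Y_j^{\mathfrak m_j}]^\bullet$. Setting $Y_j := Y_j^{\mathfrak m_j}$, the identity above lifts to $\rk^{\CC}_{Y_j}(\tilde\sigma(\tilde A)) = k \cdot \rk^{\CC}_{Y_j}(\tilde A)$ for every $j$, and applying Theorem \ref{soficluck} to both sides along $\{Y_j\}$ gives $\rk_G(\tilde\sigma(A)) = k \cdot \rk_G(A)$. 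The main obstacle is precisely this last step: the matrices whose $K$-rank must be preserved depend on $\mathfrak m_j$ through $Y_j^{\mathfrak m_j}$, creating an apparent circularity; I expect this to be broken by a diagonal selection argument that exploits that $\mathcal O$ has infinitely many maximal ideals while any nonzero element of $\mathcal O$ lies in only finitely many.
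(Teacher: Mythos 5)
Your construction in steps 1--4 matches the paper's strategy closely: the product $F$-set $Y_j^{\mathfrak m}=X_j\times H_{\mathfrak m}$ is (up to identifying $H_{\mathfrak m}\cong F/\ker\bar\sigma_{\mathfrak m}$) exactly the set $Y_i$ the authors use, your fixed-point count is their Claim on the new sofic approximation, and your explicit intertwiner $T$ gives precisely the isomorphism $(V\otimes\F[Y])_2\cong(V\otimes\F[Y])_1$ of their Lemma, hence the exact identity $\rk^{\F}_{Y_j^{\mathfrak m}}(\widetilde{\bar\sigma_{\mathfrak m}}(\cdot))=k\cdot\rk^{\F}_{Y_j^{\mathfrak m}}(\cdot)$. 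The soundness of all this is not in doubt.

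The genuine gap is the last step, and you have correctly diagnosed where it is, but the remedy you propose (a diagonal selection of $\mathfrak m_j$ to preserve $K$-rank exactly) is the wrong tool and I don't see how to make it work. The set of maximal ideals at which $\rho_{Y_j^{\mathfrak m}}(\tilde A)$ drops rank mod $\mathfrak m$ is controlled by the torsion of the integral cokernel, whose $\log$-size is on the order of $|Y_j^{\mathfrak m}|\log\lceil \tilde A\rceil$; since $|Y_j^{\mathfrak m}|$ itself grows with $|\mathcal O/\mathfrak m|$, the ``bad set'' for $M(\mathfrak m)$ grows as $\mathfrak m$ moves, so the finiteness-of-bad-primes heuristic does not let you pick $\mathfrak m$ outside its own bad set. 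The paper avoids requiring exact rank preservation altogether: it proves a \emph{uniform} estimate
$$|\rk_{Y_i}(B)-\rk_{Y_i,P_i}(B)|\le \frac{m\,|K:\Q|\,\log_2\lceil B\rceil}{\log_2|\F_i|},$$
with constant depending only on $B$ (via a bound on $\log_2|U_i^{\mathrm{tor}}|$, quoting Lemmas 8.6--8.7 of \cite{Ja19}), and then simply chooses the $P_i$ so that $|\F_i|\to\infty$. The rank drop per site then tends to zero, and a three-term triangle inequality combining this estimate, your exact mod-$\mathfrak m$ identity, and the sofic L\"uck approximation along $\{Y_i\}$ closes the argument. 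So you need to replace ``choose $\mathfrak m_j$ to preserve rank'' with ``bound the rank drop uniformly and let the residue characteristic go to infinity''; without that quantitative input the proof does not go through.
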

	First we need to prove an auxiliary result. 
	Let  $Y$ be a finite right $F$-set, $\F$  a field and $V$   a right $\F[F]$-module of   dimension $k$ over $\F$. Denote two structures of $\F[F]$-module on the $F$-space $V\otimes_{\F}\F[Y]$:
	
	$$(v\otimes y)\cdot_1 f=v\otimes (y\cdot f) \textrm {\ and\ } (v\otimes y)\cdot_2 f=(v\cdot f)\otimes (y\cdot f),\ (v\in V, y\in Y, f\in F).$$
	Denote these two modules by $(V\otimes_{\F}\F[Y])_1$ and $(V\otimes_{\F}\F[Y])_2$ respectively. The following lemma  resembles \cite[Lemma 1.1]{Lu18}.
	\begin{lem}\label{isomtwo}
		Assume that for any $y\in Y$, $v\in V$ and  $f\in \Stab_F(y)$,  $vf=v$. Then
		$$(V\otimes_{\F} \F[Y])_2\cong (V\otimes_{\F} \F[Y])_1\cong \F[Y]^k \textrm{\ as $\F[F]$-modules.}$$
	\end{lem}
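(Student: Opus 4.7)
The plan is to reduce to the case where $Y$ is a single $F$-orbit and then exhibit an explicit $\F$-linear map intertwining the two actions. Since the decomposition $Y = \bigsqcup_j O_j$ into $F$-orbits induces a decomposition $(V \otimes_\F \F[Y])_i = \bigoplus_j (V \otimes_\F \F[O_j])_i$ of $\F[F]$-modules for both $i=1$ and $i=2$, it suffices to prove the lemma one orbit at a time.

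Assume then that $Y$ is the orbit of a fixed point $y_0$, let $H = \Stab_F(y_0)$, and pick a right transversal $T$ for $H$ in $F$, so that every $y \in Y$ is uniquely of the form $y_0 \cdot t$ with $t \in T$. I would define
$$\phi: (V \otimes_\F \F[Y])_2 \longrightarrow (V \otimes_\F \F[Y])_1, \qquad v \otimes (y_0 \cdot t) \longmapsto (v \cdot t^{-1}) \otimes (y_0 \cdot t).$$
This is plainly an $\F$-linear bijection, with inverse $v \otimes y_0 t \mapsto (vt) \otimes y_0 t$. The content of the lemma is the claim that $\phi$ intertwines the two $F$-actions. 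For $f \in F$, write $tf = h\,t'$ with $h \in H$ and $t' \in T$; then $y_0 t \cdot f = y_0 t'$, and a short computation shows that the identity $\phi((v \otimes y_0 t) \cdot_2 f) = \phi(v \otimes y_0 t) \cdot_1 f$ reduces to $v \cdot (t^{-1} h t) = v$ for every $v \in V$. Because $t^{-1} h t \in \Stab_F(y_0 t) = t^{-1} H t$, this is precisely the hypothesis applied at the point $y_0 t$.

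The remaining isomorphism $(V \otimes_\F \F[Y])_1 \cong \F[Y]^k$ is immediate: under $\cdot_1$ the group $F$ acts trivially on the first tensor factor, so fixing an $\F$-basis $v_1, \ldots, v_k$ of $V$ yields a decomposition $V \otimes_\F \F[Y] = \bigoplus_{i=1}^k v_i \otimes \F[Y]$ of right $\F[F]$-modules in which each summand is isomorphic to $\F[Y]$ via $v_i \otimes y \mapsto y$.

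The main obstacle is the intertwining check in the second paragraph: one has to see that the coset bookkeeping in $tf = h\,t'$ combines correctly with the trivial action of stabilizers on $V$ to yield the desired equality. Conceptually the argument is an instance of the projection formula $V \otimes_\F \mathrm{Ind}_H^F \mathbf{1} \cong \mathrm{Ind}_H^F (V|_H)$, where the hypothesis guarantees that $V|_H$ is the trivial $H$-module of dimension $k$; once this viewpoint is taken, everything else is routine.
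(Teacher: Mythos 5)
Your proof is correct and is essentially the same argument as the paper's: the paper reduces to a single orbit $Y = x\cdot F$ and writes down the map $\alpha(v\otimes x f) = vf\otimes xf$ from $(V\otimes_\F\F[Y])_1$ to $(V\otimes_\F\F[Y])_2$, which is exactly the inverse of your $\phi$. The only cosmetic difference is that the paper checks well-definedness directly using the hypothesis at the base point $x$, after which the intertwining identity $\alpha((v\otimes xf)\cdot_1 g)=\alpha(v\otimes xf)\cdot_2 g$ is a one-line computation, whereas you build in well-definedness via a transversal and then push the use of the hypothesis into the intertwining check through the bookkeeping $tf=ht'$; both arrangements use the same underlying fact, and your concluding remark about the projection formula is a nice conceptual gloss that the paper does not spell out.
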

	\begin{proof} It is clear that $ (V\otimes_{\F} \F[Y])_1\cong \F[Y]^k$.
		Without loss of generality we can assume that $F$ acts transitively on $Y$. Hence $Y=x\cdot F$ for some $x\in Y$.  Define the following $\F$-map
		$$\alpha: (V\otimes_{\F} \F[Y])_1\to (V\otimes_{\F} \F[Y])_2, \ \alpha(v\otimes x\cdot f)=v\cdot f\otimes x\cdot f \ (v\in V,   f\in F).$$
		Observe that, since  $\Stab_F(x)$ acts trivially on $V$, $\alpha$ is well defined. On the other hand, if $g\in F$ we obtain
		$$\alpha((v\otimes x\cdot f)\cdot _1g)=\alpha(v\otimes x\cdot  fg)=v\cdot fg\otimes x\cdot fg=(v\cdot f\otimes x\cdot f)\cdot _2g=\alpha(v\otimes x\cdot f)\cdot _2 g .$$
		Thus, $\alpha$ is a $\F[F]$-homomorphism. Since $\alpha$ is clearly bijective, we are done. \end{proof}

	\begin{proof}[Proof of Theorem \ref{algebraic}]
		Since the matrix $A$ involves only a finite number of elements of $G$, we can assume that $G$ is finitely generated. Therefore, there are a finite extension $K$ of $\Q$ and a finite collection $S$ of valuations  of $\mathcal O_K$ such that $\sigma(G)\le \GL_k(\mathcal O_{K,S})$ and $A\in \Mat_{n\times m}(\mathcal O_{K,S}[G])$.

		Since $G$ is a finitely generated sofic group, there exist a finitely generated free group $F$, a normal subgroup $N$ of $F$ and  a family of $F$-sets $\{X_i:\ i\in \N\}$  such that $G\cong F/N$ and $\{X_i:\ i\in \N\}$ is a sofic approximation of $F/N$. 
		
		We fix an infinite collection $\{P_i:\ i\in \N\}$ of maximal ideals of $\mathcal O_{K,S}$ and for each $i\in \N$ we put $\F_i=\mathcal O_{K,S}/P_i$. Then clearly we have that
		\begin{equation}\label{large} \lim_{i\to \infty} |\F_i|\to \infty.\end{equation}
		Denote by $\sigma_i:F\to  \GL_k(\F_i)$  the composition of  the natural map $F\to G$, the map $\sigma:G\to \GL_k(\mathcal O_{K,S})$ and the canonical map $\GL_k(\mathcal O_{K,S})\to \GL_k(\F_i)$. Put  $N_i=\ker \sigma_i$. 
		We consider $Y_i=X_i\times F/N_i$ with diagonal action of $F$. 
		
		\begin{claim}\label{anotherappr}
			The collection $\{Y_i:\ i\in \N\}$ is a sofic  approximation of $F/N$.
		\end{claim}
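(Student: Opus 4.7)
The plan is to verify the two defining conditions of a sofic approximation for $\{Y_i\}$ directly, by describing the diagonal fixed-point set $\Fix_{Y_i}(w)$ in terms of what $w$ does on each factor.

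First, two preliminary facts should be noted. Since $\sigma_i\colon F\to\GL_k(\F_i)$ factors through the projection $F\to G=F/N$, one has $N\subseteq N_i$ for every $i\in\N$. Moreover, because $\F_i$ is finite, so is $\GL_k(\F_i)$; hence $N_i$ has finite index in $F$ and each $Y_i=X_i\times F/N_i$ is a finite right $F$-set under the diagonal action.

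Next I would analyse the action of $w\in F$ on the right $F$-set $F/N_i$, given by $(fN_i)\cdot g=fgN_i$ (well defined since $N_i$ is normal). A coset $fN_i$ is fixed by $w$ iff $fwN_i=fN_i$, i.e.\ iff $w\in N_i$. Consequently $w$ acts on $F/N_i$ either as the identity (when $w\in N_i$) or without any fixed point (when $w\notin N_i$), and under the diagonal action on $Y_i$ this gives
$$
\Fix_{Y_i}(w)\;=\;\begin{cases}\Fix_{X_i}(w)\times F/N_i & \text{if }w\in N_i,\\ \emptyset & \text{otherwise.}\end{cases}
$$
In particular, for every $w\in F$,
$$
\frac{|\Fix_{Y_i}(w)|}{|Y_i|}\;\le\;\frac{|\Fix_{X_i}(w)|}{|X_i|},
$$
with equality whenever $w\in N_i$.

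From these identities the two limits required by the definition of a sofic approximation of $F/N$ follow at once. If $w\in N$, then by the containment $N\subseteq N_i$ one has $w\in N_i$ for every $i$, the above inequality is an equality, and both sides tend to $1$ because $\{X_i\}$ is a sofic approximation of $F/N$. If $w\notin N$, the inequality and the soficity of $\{X_i\}$ together force $|\Fix_{Y_i}(w)|/|Y_i|\to 0$.

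The argument is essentially formal and there is no substantive obstacle. The one fact doing all the work is the inclusion $N\subseteq N_i$, which is automatic from the way $\sigma_i$ is built out of $\sigma\colon G\to\GL_k(\mathcal O_{K,S})$; the finiteness of each $Y_i$ is also used and comes for free from $\F_i$ being finite.
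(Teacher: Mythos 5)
Your proof is correct and follows essentially the same approach as the paper: both rely on the containments between $\Fix_{Y_i}(w)$ and $\Fix_{X_i}(w)\times F/N_i$, then invoke the soficity of $\{X_i\}$. You make the containments into exact equalities by computing $\Fix_{F/N_i}(w)$ explicitly, and you spell out the auxiliary facts ($N\subseteq N_i$, finiteness of $Y_i$) that the paper leaves implicit, but the underlying argument is the same.
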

		\begin{proof}
			Let $w\in F$. Observe that if  $w\in N$, then $\Fix_{X_k}(w)\times F/N_i\subseteq \Fix_{Y_k}(w)$ and if $w\not \in N$, then $\Fix_{Y_k}(w)\subseteq \Fix_{X_k}(w)\times F/N_i$. 
			Thus, since $\{X_i\}$ is a sofic  approximation of $F/N$,  $\{Y_i\}$ is also a sofic  approximation of $F/N$.
		\end{proof}
		Let $B\in \Mat_{n\times m}(\mathcal O_{K,S}[F])$.
		For each $i\in \N$, let $\rho_{Y_i, P_i}(B):\F_i[Y_i]^n\to \F_i[Y_i]^m$ be the map induced by  multiplication by $B$  on the right  side:
		$$(v_1,\ldots,v_n)\rho_{Y_i, P_i}(B)=(v_1,\ldots, v_n)B.$$
		We define  the Sylvester matrix rank function $\rk_{Y_i,P_i}$ over $\mathcal O_{K,S}[F]$ by means of
		$$ \rk_{Y_i,P_i}(B):=\frac{\rk_{\F_i}( \rho_{Y_i, P_i}(B))}{|Y_i|}.$$
		The following result explains why it is more convenient to work with the approximation $\{Y_i\}$ than $\{X_i\}$.
		\begin{claim}\label{finite}
			For each $i\in \N$ and any matrix $B$ over $\mathcal O_{K,S}[F]$, we have that $$\rk_{Y_i,P_i}(\tilde \sigma(B))=k\cdot \rk_{Y_i,P_i}(B).$$
		\end{claim}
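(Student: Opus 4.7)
The plan is to apply Lemma \ref{isomtwo} with $V = \F_i^k$ (the representation space of $\sigma_i$) and $Y = Y_i$. The key observation is that the $\F_i$-space $\F_i[Y_i]^k$ carries two natural $\F_i[F]$-module structures through its identification with $V \otimes_{\F_i} \F_i[Y_i]$: the ``untwisted'' structure $_1$ in which $F$ acts only through translation on $\F_i[Y_i]$, and the ``twisted'' structure $_2$ in which $F$ acts diagonally via $\sigma_i$ on $V$ and by translation on $\F_i[Y_i]$. Under these identifications, $\rho_{Y_i, P_i}(B)$ on $\F_i[Y_i]^{nk}$ will correspond to $B$ acting on $(V \otimes_{\F_i} \F_i[Y_i]^n)_1$, while $\rho_{Y_i, P_i}(\tilde\sigma(B))$ will correspond to $B$ acting on $(V \otimes_{\F_i} \F_i[Y_i]^n)_2$. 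Lemma \ref{isomtwo} will then force these two operators to have the same $\F_i$-rank, which is the content of the claim.

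First I would verify the hypothesis of Lemma \ref{isomtwo}. Because $N_i = \ker \sigma_i$ is normal in $F$, the stabilizer in $F$ of any coset $gN_i \in F/N_i$ under right multiplication equals $N_i$ itself. Consequently, for any $(x, gN_i) \in Y_i$ one has $\Stab_F((x, gN_i)) \subseteq N_i = \ker \sigma_i$, so every element of such a stabilizer acts trivially on $V$, which is exactly what the lemma requires.

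Next I would unwind the definition $\tilde\sigma(g) = \sigma_i(g)\cdot g$ to check that, under the identification $V \otimes_{\F_i} \F_i[Y_i]^n \cong \F_i[Y_i]^{nk}$, the operator $\rho_{Y_i,P_i}(\tilde\sigma(B))$ coincides with the action of $B$ on $(V\otimes_{\F_i}\F_i[Y_i]^n)_2$: the matrix factor $\sigma_i(g)$ mixes the $V$-coordinates while $g$ translates in $\F_i[Y_i]$, which is precisely the diagonal action $\cdot_2$. Meanwhile, $\rho_{Y_i,P_i}(B)$ viewed on $(V\otimes_{\F_i}\F_i[Y_i]^n)_1$ is simply $k$ independent copies of $\rho_{Y_i,P_i}(B)$ on $\F_i[Y_i]^n$, so its $\F_i$-rank is $k\cdot\rk_{\F_i}(\rho_{Y_i,P_i}(B))$.

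Finally, Lemma \ref{isomtwo} provides an $\F_i[F]$-module isomorphism between structures $_1$ and $_2$; taking direct sums yields isomorphisms of the $n$-fold and $m$-fold copies that intertwine the two actions of $B$, so the induced $\F_i$-ranks agree. Dividing by $|Y_i|$ then gives $\rk_{Y_i,P_i}(\tilde\sigma(B)) = k\cdot \rk_{Y_i,P_i}(B)$. I expect the main obstacle to be the second step, namely the index bookkeeping needed to match the block-matrix operator $\tilde\sigma(B)$ with the diagonal action of $B$ on $V\otimes_{\F_i}\F_i[Y_i]^n$; once this identification is written down carefully, the claim reduces immediately to Lemma \ref{isomtwo}.
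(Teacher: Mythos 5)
Your proposal is correct and follows essentially the same route as the paper: identify $\F_i[Y_i]^k$ with $V\otimes_{\F_i}\F_i[Y_i]$ for $V=\F_i^k$ carrying the $\sigma_i$-action, observe that $\rho_{Y_i,P_i}(B\,I_k)$ and $\rho_{Y_i,P_i}(\tilde\sigma(B))$ are the actions of $B$ with respect to the module structures $\cdot_1$ and $\cdot_2$, and invoke Lemma \ref{isomtwo}. You also explicitly verify the stabilizer hypothesis of Lemma \ref{isomtwo} using $\Stab_F((x,gN_i))\subseteq N_i=\ker\sigma_i$, a detail the paper's proof leaves implicit but which is exactly the reason for replacing $X_i$ by $Y_i=X_i\times F/N_i$.
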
\label{anotheappr}
		\begin{proof} Let $V=\F_i^k$. It becomes an $\F_i[F]$-module if we define $v\cdot f=v\sigma_i(f)$. Then we obtain that  $F$ acts on $(V\otimes_{\F_i}\F_i[Y_i])_1$ and $(V\otimes_{\F_i}\F_i[Y_i])_2$ as follows.
			$$(v\otimes y)\cdot_1 f=v\otimes (y\cdot f), \ (v\otimes y)\cdot_2 f=(v)\sigma_i(f) \otimes (y\cdot f). $$
			Therefore, if we identify $\F_i^k\otimes_{\F_i} \F_i[Y_i]$ with $\F_i[Y_i]^k$ we obtain that
			$$m\cdot_1 b=(m)\rho_{Y_i, P_i}(b I_k)  \textrm{\ and \ } m\cdot _2 b=(m)\rho_{Y_i, P_i}(\tilde \sigma(b)) \ (m\in \F_i[Y_i]^k, b\in \F_i[F]).$$
			Here $I_k$ denotes the identity $k$ by $k$ matrix. Thus, the claim follows from Lemma \ref{isomtwo}.
		\end{proof}
		Now we compare $ \rk_{Y_i}(B)$ and  $\rk_{Y_i,P_i}(B)$.
		\begin{claim} \label{constantC}
			Let $B\in \Mat_{n\times m}(\mathcal O_{K,S}[F])$. Then there exists a constant $C$ depending only on $B$ such that
			$$|\rk_{Y_i}(B)-\rk_{Y_i,P_i}(B)|\le \frac{C}{\log_2{|\F_i|}} .$$
		\end{claim}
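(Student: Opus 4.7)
The plan is to view $M := \rho_{Y_i}(B)$ as a sparse matrix over $\mathcal{O}_{K,S}$ of size about $|Y_i|\times|Y_i|$, whose rank over $\CC$ equals $|Y_i|\cdot\rk_{Y_i}(B)$ and whose rank over $\F_i$ after reduction modulo $P_i$ equals $|Y_i|\cdot\rk_{Y_i,P_i}(B)$. It then suffices to bound the ``rank drop'' $\rk_\CC(M)-\rk_{\F_i}(\bar M)$ by $C|Y_i|/\log|\F_i|$. Two features of $M$ that I will exploit throughout are: (a) every row has only a bounded number of nonzero entries, bounded by a constant $C_1$ depending only on the supports of the entries of $B$; and (b) each nonzero entry of $M$ has absolute value at most $C_2$ under every archimedean embedding of $K$, and $P$-adic valuation bounded below by $-C_3$ for every $P\in S$.

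The first step is to translate the rank drop into a valuation. Localising at $P_i$ turns $\mathcal{O}_{K,S}$ into a DVR with uniformiser $\pi$ and residue field $\F_i$, so $M$ admits a Smith normal form there with invariant factors $\pi^{e_1},\ldots,\pi^{e_r}$ with $0\le e_1\le\cdots\le e_r$, where $r=\rk_\CC(M)=\rk_K(M)$. Reduction modulo $P_i$ kills exactly the positions with $e_i>0$, so the rank drop equals $\#\{i:e_i>0\}\le\sum_i e_i$. Since $\sum_i e_i$ is the $P_i$-adic valuation of the $r$-th determinantal ideal of $M$, it is bounded above by $v_{P_i}(\mu)$ for any nonzero $r\times r$ minor $\mu$ of $M$, and such a $\mu$ exists because $\rk_K(M)=r$.

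The heart of the argument is to bound $v_{P_i}(\mu)\log|\F_i|$ linearly in $|Y_i|$ using the product formula for $K$. Arranging that $P_i\notin S$ (we may discard the finitely many bad $i$) and using $v_P(\mu)\ge 0$ for $P\notin S\cup\{P_i\}$, the product formula yields
$$v_{P_i}(\mu)\log|\F_i|\le\sum_{v\text{ arch}}n_v\log|\sigma_v(\mu)|+\sum_{P\in S}\max(0,-v_P(\mu))\log|\F_P|.$$
I will bound the archimedean terms via Hadamard's inequality applied to the $r\times r$ submatrix carving out $\mu$: sparsity gives at most $C_1$ nonzero entries per row, each of archimedean absolute value at most $C_2$, hence $|\sigma_v(\mu)|\le(\sqrt{C_1}\,C_2)^r$, whose logarithm is linear in $r$. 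For $P\in S$, writing $\mu$ as a signed sum of $r!$ products of $r$ entries gives $v_P(\mu)\ge -rC_3$, again linear in $r$. Adding everything, $v_{P_i}(\mu)\log|\F_i|\le C_B\cdot r$ for a constant $C_B$ depending only on $B$, $K$ and $S$.

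Since $r\le\max(n,m)\,|Y_i|$, dividing by $|Y_i|$ yields the desired inequality (with an extra factor $\log 2$ to convert to $\log_2|\F_i|$). The main obstacle I anticipate is securing the archimedean Hadamard bound linear in $r$ rather than the naive $r\log r$: this is precisely where the sparsity of $M$ is essential, since a non-sparse bound would introduce a factor $\log|Y_i|$ and spoil the uniformity in $i$. The rest is standard Dedekind-domain number theory and careful bookkeeping of the contributions from $S$ and from the archimedean places.
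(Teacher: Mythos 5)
Your proposal is correct and delivers the right bound, but it takes a more self-contained route than the paper. The paper forms the $\mathcal{O}_K$-module
$$U_i=\bigl(\mathcal{O}_K[Y_i]^m+\mathcal{O}_K[Y_i]^n\rho_{Y_i}(B)\bigr)\big/\mathcal{O}_K[Y_i]^n\rho_{Y_i}(B),$$
identifies the rank drop with $\dim_{\F_i}(\F_i\otimes_{\OO_K}U_i^{\mathrm{tor}})/|Y_i|$, bounds this by $\log_2|U_i^{\mathrm{tor}}|/(|Y_i|\log_2|\F_i|)$, and then cites \cite[Lemmas 8.6 and 8.7]{Ja19} for the estimate $\log_2|U_i^{\mathrm{tor}}|\le m|Y_i|\,|K:\Q|\log_2\lceil B\rceil$; the whole argument is thus essentially a citation. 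You instead unpack the mechanism: localizing at $P_i$ and using Smith normal form to identify the rank drop with the number of positive invariant factors, bounding that by the $P_i$-adic valuation of a nonzero maximal minor $\mu$, and then controlling $v_{P_i}(\mu)\log|\F_i|$ via the product formula, Hadamard's inequality (with the sparsity of $\rho_{Y_i}(B)$ supplying the linear-in-$r$ archimedean bound), and a trivial $p$-adic bound at the places in $S$. This is the same height-versus-torsion phenomenon that underlies the cited lemmas, so your proof is not a shortcut, but it is pleasingly elementary and transparent about where the constant $C$ comes from. Two small remarks: $\rho_{Y_i}(B)$ is $n|Y_i|\times m|Y_i|$, so the sharp bound is $r\le\min(n,m)|Y_i|$ rather than $\max(n,m)|Y_i|$ (harmless, as $\max$ is still an upper bound); and the maximal ideals $P_i$ of $\mathcal{O}_{K,S}$ automatically avoid $S$, so no indices need to be discarded.
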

		
		\begin{proof} We prove the claim using results from \cite[Section 8]{Ja19}.   
			
			Let $\alpha\in K$ and let $\alpha_1,\ldots, \alpha_s$ be the roots of the minimal polynomial of $\alpha$ over $\Q$. We put $$\lceil {\alpha}\rceil=\max_i|\alpha_i|.$$
			For any element $b=\sum_{h\in F}a_h h$ ($a_h\in K$) of the group algebra $K[F]$   we put  $$\lceil b\rceil=\sum_{h\in F}\lceil a_h\rceil.$$ 
			We also define  $$  \lceil B \rceil =\max_{j} \sum_i  \lceil b_{ij}\rceil.$$ 
			For each $i\in \N$, let us define
			$$U_i= (\OO_K[Y_i]^m+\OO_K[Y_i]^n \rho_{Y_i}(B))/\OO_K[Y_i]^n \rho_{Y_i}(B)$$ Observe that $$U_i\cong (U_i/U_i^{tor})\oplus U_i^{tor},$$
			where  $ U_i^{tor} $ is the torsion part of the $\OO_K$-module $U_i$.  Therefore,
			\begin{multline*}
				| \rk_{Y_i}(B)-\rk_{Y_i, P_i}(B)|= \rk_{Y_i}(B)-\rk_{Y_i, P_i}(B)=\\ 
				(m- \frac{\dim_{K}(K\otimes _{\OO_K} U_i)}{|Y_i|})-(m-\frac{\dim_{\F_i}(\F_i\otimes _{\OO_K} U_i)}{|Y_i|})=\\ \frac{ \dim_{\F_i}(\F_i\otimes _{\OO_K} U_i^{tor})}{|Y_i|} \le \frac{\log_{|\F_i|} |U_i^{tor}|}{|Y_i|}= \frac{\log_{2} |U_i^{tor}|}{|Y_i|\log_2|\F_i|}.\end{multline*}
			By \cite[Lemmas 8.6 and 8.7]{Ja19}, 
			$$\log_2 |U_i^{tor}|\le m|Y_i| |K:\Q| \log_2\lceil B \rceil.$$
			Therefore, we can take $C=m|K:\Q|\log_2 \lceil B \rceil.$
		\end{proof}
		We are ready to finish the proof of the theorem.  Let $\epsilon>0$.   Let $B$ be a matrix over $\OO_{K,S}[F]$ which maps to  $A$. By (\ref{large}) and Claim \ref{constantC}, 
		there exists $j_1\in \N$  such that for every $i\ge j_1$,
		\begin{equation}
			\label{apprmodp}
			|\rk_{Y_i}(B)-\rk_{Y_i,P_i}(B)|\le \frac \epsilon {4k}  \textrm{\ and\ } |\rk_{Y_i}(\tilde \sigma(B))-\rk_{Y_i,P_i}(\tilde\sigma(B))|\le  \frac \epsilon  4.\end{equation}
		By Claim \ref{anotherappr} and Theorem \ref{soficluck}, there exists $j_2\in \N$  such that for every $i\ge j_2$,
		\begin{equation}\label{apprluck}
			|\rk_G(B)-\rk_{Y_i}(B)|\le  \frac \epsilon  {4k} \textrm{\ and \ } |\rk_G(\tilde \sigma (B))-\rk_{Y_i}(\tilde \sigma( B))|\le \frac \epsilon  4 .\end{equation}
		Therefore putting together Claim \ref{finite}, (\ref{apprmodp}) and (\ref{apprluck}), we obtain that 
		for every $i\ge j_1,j_2$,
		\begin{multline*}|\rk_G(\tilde \sigma (B))-k\cdot \rk_G(B)|\le 
			|\rk_G(\tilde \sigma (B))-\rk_{Y_i}(\tilde \sigma( B))|+\\
			|\rk_{Y_i}(\tilde \sigma( B))-\rk_{Y_i,P_i}(\tilde\sigma(B))|
			+ |\rk_{Y_i,P_i}(\tilde\sigma(B))-k\cdot \rk_{Y_i,P_i}(B) |+\\ k\cdot |\rk_{Y_i,P_i}(B)-\rk_{Y_i}(B)|+k\cdot |\rk_{Y_i}(B)-\rk_G(B)|\le \epsilon.\end{multline*}
		This finishes the proof.
	\end{proof}
	\subsection{Proof of Theorem \ref{twistedsofic}}
	Since the matrix $A$ involves only a finite number of elements of $G$, we can assume that $G$ is finitely generated.  Therefore there are $t_1,\ldots, t_l\in \CC$ such that if we put $R=\overline \Q[t_1,\ldots, t_l]$, then $\sigma(G)\le \GL_k(R)$ and $A$ is a matrix over $R[G]$.
	
	Let $I$ be the kernel of the map $\overline \Q[x_1,\ldots, x_l]\to R$ that sends $x_i$ to $t_i$ ($1\le i\le l$). Then $R\cong \overline \Q[x_1,\ldots, x_l]/I$. If $p=(p_1,\ldots, p_l)\in V(I)$ and $C$ is a matrix over $R[G]$, then we denote by $C (p)$ the image of $C$ after sending $t_i$ to $p_i$  ($1\le i\le l$).  We put $t=(t_1,\ldots, t_l)$. Thus, $A=A(t)$ and $\tilde \sigma (A)=\tilde \sigma (A)(t)$.
	
	\begin{claim} \label{Kazhdan}Let $s_i\in V(I)$ and assume that $\displaystyle \lim_{i\to \infty} s_i=t$. Let $C\in \Mat_{n\times m}(R[G])$ be a matrix over $R[G]$. Then $$\liminf_{i\to \infty} \rk_G(C(s_i))\ge \rk_G(C).$$
	\end{claim}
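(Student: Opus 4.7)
My plan is to reduce the claim to a general lower semicontinuity property of $\rk_G$ under operator norm convergence, and then establish that property via continuous functional calculus.

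First, every entry of $C$ has the form $\sum_{h\in H} p_{jk,h}(t_1,\ldots,t_l)\,h$ for a common finite subset $H\subseteq G$ and polynomials $p_{jk,h}\in \overline{\Q}[x_1,\ldots,x_l]$. Since polynomial evaluation is continuous, $s_i\to t$ yields entry-wise convergence $C(s_i)\to C$ in $\Mat_{n\times m}(\CC[G])$ with all supports remaining inside $H$. Using $\|\rho_G(b)\|\le \sum_h |a_h|$ for $b=\sum_h a_h h$ with finite support, this implies operator-norm convergence $T_i:=\rho_G(C(s_i))\to T:=\rho_G(C)$ as bounded operators $l^2(G)^n\to l^2(G)^m$.

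The claim will then follow from the following semicontinuity statement (a form of Kazhdan's inequality): if $T_i,T:l^2(G)^n\to l^2(G)^m$ are bounded with $T_i\to T$ in operator norm, then $\rk_G(T)\le \liminf_{i\to\infty}\rk_G(T_i)$. To prove this I will apply continuous functional calculus to the positive operators $T_i^*T_i$ and $T^*T$ on $l^2(G)^n$. For each $\epsilon>0$ fix a continuous $f_\epsilon:[0,\infty)\to[0,1]$ with $f_\epsilon(0)=0$ and $f_\epsilon(x)=1$ for $x\ge\epsilon$. Then $f_\epsilon(T^*T)$ is dominated by the spectral projection $\chi_{(0,\infty)}(T^*T)$, whose image is $\overline{\im T^*}$ and whose $\Tr_G$-trace equals $\rk_G(T)$; moreover $f_\epsilon(T^*T)\nearrow \chi_{(0,\infty)}(T^*T)$ as $\epsilon\to 0^+$, so normality of the trace gives $\Tr_G f_\epsilon(T^*T)\nearrow \rk_G(T)$. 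Analogously $\Tr_G f_\epsilon(T_i^*T_i)\le \rk_G(T_i)$ for each $i$. Norm convergence $T_i\to T$ yields $T_i^*T_i\to T^*T$ in norm, hence by continuity of the functional calculus $f_\epsilon(T_i^*T_i)\to f_\epsilon(T^*T)$ in norm; since $\Tr_G$ is norm-continuous on norm-bounded families, $\Tr_G f_\epsilon(T_i^*T_i)\to \Tr_G f_\epsilon(T^*T)$. Combining,
\[
\Tr_G f_\epsilon(T^*T)=\lim_{i\to\infty}\Tr_G f_\epsilon(T_i^*T_i)\le \liminf_{i\to\infty}\rk_G(T_i),
\]
and letting $\epsilon\to 0^+$ gives $\rk_G(T)\le \liminf_i \rk_G(T_i)$. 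Applied to the operators constructed above this is the claim.

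The main delicate step is the monotone convergence $\Tr_G f_\epsilon(T^*T)\nearrow \rk_G(T)$, which relies on normality of the canonical trace on the group von Neumann algebra of $G$. A natural alternative would be to work with a sofic approximation $\{X_k\}$ and observe that $s\mapsto \rk_{X_k}(C(s))$ is lower semicontinuous at each $k$ (since ordinary matrix rank over $\CC$ is lower semicontinuous in the entries), and then try to pass to the limit via Theorem \ref{soficluck}; the obstacle there is the exchange of $\liminf_i$ with $\lim_k$, which would require a uniform-in-$s_i$ version of the sofic L\"uck approximation, whereas the spectral argument sidesteps this issue entirely.
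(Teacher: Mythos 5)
Your proof is correct, and it takes a genuinely different (and more self-contained) technical route than the paper, though both are spectral-theoretic Kazhdan-inequality arguments. The paper constructs spectral measures $\mu_i, \mu$ for $\rho(C(s_i))\rho(C(s_i))^*$ and $\rho(C)\rho(C)^*$ via their moments, asserts weak convergence $\mu_i \to \mu$, and invokes the Portmanteau theorem to get $\limsup_i \mu_i(\{0\}) \le \mu(\{0\})$, citing \cite[Proposition 10.7]{Ja19b} for details. You instead establish operator-norm convergence $T_i \to T$ directly (using that entries are polynomial in $s_i$ with a fixed finite support), approximate $\chi_{(0,\infty)}$ from below by continuous $f_\epsilon$, and use norm-continuity of $\Tr_G$ on $\Mat_n$ together with normality of the trace. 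Your route replaces the probability-theoretic input (weak convergence of measures, Portmanteau) with elementary functional calculus, and in particular makes the step the paper leaves to the reader (why moment convergence yields the needed measure-theoretic conclusion) fully explicit. One small imprecision: for the monotone convergence $\Tr_G f_\epsilon(T^*T) \nearrow \rk_G(T)$ as $\epsilon \to 0^+$, you need the family $\{f_\epsilon\}$ to be monotone in $\epsilon$ (e.g.\ $f_\epsilon(x) = \min(x/\epsilon, 1)$); "fix a continuous $f_\epsilon$" for each $\epsilon$ separately does not automatically guarantee this, though the required choice is trivial to make. Also, the qualifier "norm-continuous on norm-bounded families" is superfluous: $|\Tr_G(A) - \Tr_G(B)| \le n\|A - B\|$ on $\Mat_n(\mathcal B(l^2(G)))$, so $\Tr_G$ is Lipschitz outright.
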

	\begin{proof} The proof of this claim  is analogous to the proof of Kazhdan's inequality (see \cite[Proposition 10.7]{Ja19b} for details). 
		
		First we construct a spectral measure $\mu$ associated with $C$. Let $T=\rho(C)\rho(C)^*\in \Mat_n(\mathcal B(l^2(G))$. Then $\mu$ is defined as  the measure with support in  $[0, \|T\|]$, whose moments are calculated as
		$$\int x^ld\mu=\Tr_{G}(T^l)=\sum_{i=1}^n\langle ( \mathbf 1_i) T^l,\mathbf 1_i\rangle_{(l^2(G))^n} . $$
		Similarly we define a spectral measure $\mu_i$ associated with $C(s_i)$. 
		
		The condition $\displaystyle \lim_{i\to \infty} s_i=t$ implies that  the meausures $\mu_i$ converge weakly to $\mu$. Therefore, by the Portmanteau theorem (see, for example, \cite[Theorem
		11.1.1]{Du02}),
		$$\limsup_{i\to \infty} \mu_i(\{0\})\le \mu(\{0\}).$$
		However, we have that
		\begin{multline*}
			\mu(\{0\})=\dim_G (\ker (\rho(C)\rho(C)^*))=\dim_G (\ker \rho(C)) \textrm{\ and \ }\\  \mu_i(\{0\})=\dim_G ( \ker (\rho(C(s_i))\rho(C(s_i))^*))=\dim_G (\ker \rho(C(s_i))).\end{multline*}
		Therefore,
		\begin{multline*}\liminf_{i\to \infty} \rk_G(C(s_i))=\liminf_{i\to \infty} (n-\dim_G ( \ker \rho(C(s_i))))=\\ \liminf_{i\to \infty} (n- \mu_i(\{0\}))=n-\limsup_{i\to \infty}\mu_i(\{0\})\ge n- \mu(\{0\})=\rk_G(C).\end{multline*}
	\end{proof}
	
	\begin{claim}\label{le} Let $s\in V(I)$ and $C$   a matrix over $R[G]$. Then $\rk_G (C(s))\le \rk_G (C)$.
	\end{claim}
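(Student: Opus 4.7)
The plan is to exploit the tautological fact that $t$ is a Zariski-generic point of $V(I)$ and to combine this genericity with the sofic L\"uck approximation (Theorem \ref{soficluck}) applied at each finite level of a sofic approximation of $G$. The intuition is that specialising a matrix that depends polynomially on parameters can only drop the rank, and Theorem \ref{soficluck} lets us promote this pointwise inequality at each level of the approximation to an inequality of von Neumann ranks.

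First I would fix a sofic approximation $\{X_k\}$ of $G=F/N$ and lift $C$ to a matrix $\tilde C\in \Mat_{n\times m}(\overline{\Q}[x_1,\ldots,x_l][F])$, lifting simultaneously $G$ to $F$ and the generators $t_i$ of $R$ to the formal variables $x_i$, so that every entry of $\tilde C$ is a polynomial in $x_1,\ldots,x_l$ with coefficients in $\overline{\Q}[F]$. For any $p\in V(I)$ the specialisation $\tilde C(p)\in \Mat_{n\times m}(\CC[F])$ is then well defined, and its image in $\Mat_{n\times m}(\CC[G])$ is $C(p)$; in particular $\tilde C(t)$ projects to $C$. In the standard basis of $\CC[X_k]$ indexed by $X_k$, the operator $\rho_{X_k}(\tilde C(p))$ is represented by a matrix $M_k(p)$ over $\CC$ of size $(n|X_k|)\times(m|X_k|)$ whose entries are polynomials in $p$ with coefficients in $\overline{\Q}$, since each $f\in F$ acts on $\CC[X_k]$ by a permutation matrix.

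The key step is an elementary upper-semicontinuity argument for rank. Set
$$r_k := \max\{\rank M_k(p) : p\in V(I)\}$$
and pick an $r_k\times r_k$ minor $\mu_k\in \overline{\Q}[x_1,\ldots,x_l]$ of $M_k(\cdot)$ that is not identically zero on $V(I)$. By definition $I$ is the ideal of polynomials over $\overline{\Q}$ that vanish at $t$, and $t\in V(I)$, so for any polynomial over $\overline{\Q}$ the properties ``vanishing on $V(I)$'' and ``vanishing at $t$'' are equivalent. Consequently $\mu_k(t)\neq 0$, which forces $\rank M_k(t)=r_k$, and dividing by $|X_k|$ yields
$$\rk_{X_k}(\tilde C(s))\ \le\ \rk_{X_k}(\tilde C(t))\qquad\text{for every }s\in V(I).$$

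Finally, viewing $\tilde C(s)$ and $\tilde C(t)$ as matrices in $\Mat_{n\times m}(\CC[F])$ and letting $k\to\infty$, the sofic L\"uck approximation gives
$$\rk_G(C(s))=\lim_{k\to\infty}\rk_{X_k}(\tilde C(s))\ \le\ \lim_{k\to\infty}\rk_{X_k}(\tilde C(t))= \rk_G(C).$$
There is no serious obstacle in this argument: the only genuinely non-elementary input is Theorem \ref{soficluck}, and the desired inequality is simply the asymptotic shadow of the evident inequality $\rank M_k(s)\le\rank M_k(t)$, which already holds at every finite level of the sofic approximation.
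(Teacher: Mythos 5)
Your proof is correct and follows essentially the same route as the paper: lift $C$ to the free group, establish the pointwise inequality $\rk_{X_k}(\tilde C(s))\le \rk_{X_k}(\tilde C(t))$ at each finite level of the sofic approximation, and pass to the limit via Theorem \ref{soficluck}. The paper phrases the finite-level inequality more tersely as ``the matrix of $\rho_{X_i}(B(s))$ is an image of the matrix of $\rho_{X_i}(B)$'' (rank can only drop under a ring homomorphism from the domain $R$), whereas you spell out the same fact via minors and the Zariski-genericity of $t$ in $V(I)$ -- but the mechanism is identical.
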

	\begin{proof} Since $G$ is a finitely generated sofic group, there exist a finitely generated free group $F$, a normal subgroup $N$ of $F$ and  a family of $F$-sets $\{X_i:\ i\in \N\}$  such that $G\cong F/N$ and $\{X_i:\ i\in \N\}$ is a sofic
		approximation of $F/N$. 
		Let $B$ be a matrix over $R[F]$ that maps to  $C$. Then    $B(s)$ maps to  $C(s)$.
		
		Since for each $i\in \N$, the matrix  associated   to  $\rho_{X_i}(B(s))$ (with respect to some bases) is an image  of  the matrix associated to   $\rho_{X_i}(B)$, we obtain that $\rk_{X_i} (B(s))\le \rk_{X_i}(B)$. 
		Now applying Theorem \ref{soficluck}, we conclude that
		$$\rk_G (C(s))=\rk_G (B(s))=\lim_{i\to \infty} \rk_{X_i}(B(s))\le \lim_{i\to \infty} \rk_{X_i}(B)=\rk_G(B)=\rk_G(C). \qedhere$$
	\end{proof}
	
	Now we are ready to finish the proof of the theorem. By \cite[Lemma 3.2]{BG08}, there are $\{s_i\in V(I)\cap \overline \Q\}$ such that  $\displaystyle \lim_{i\to \infty} s_i=t$. By Claims \ref{Kazhdan} and \ref{le}, 
	$$\lim_{i\to \infty} \rk_G(A(s_i))= \rk_G(A) \textrm{\ and \ } \lim_{i\to \infty} \rk_G(\tilde \sigma(A)(s_i))= \rk_G(\tilde\sigma(A)) .$$
	Denote by $\sigma_i:G\to \GL_k(\overline \Q)$ the composition of $\sigma:G\to \GL_k(R)$ and the map $R\to \overline \Q$ sending the $l$-tuple $t$ to the $l$-tuple $s_i$. Then $\tilde \sigma(A)(s_i)=\tilde{\sigma_i}(A(s_i))$. 
	Applying Theorem \ref{algebraic}, we obtain that 
	$$\rk_G(\tilde \sigma(A)(s_i))=
	\rk_G(\tilde{\sigma_i}(A(s_i))=
	k\cdot \rk_G(A(s_i)).
	$$ 
	Thus,  $\rk_G(\tilde\sigma(A))=k\cdot \rk_G(A)$. \qedsymbol
	
	\begin{rem} In  \cite[Theorem 1.1]{Ja19} it was proved that for a sofic group G, the strong Atiyah conjecture   over $\overline \Q$ implies the strong Atiyah conjecture    over $\CC$. We would like to notice that
		the previous argument provides an alternative way to obtain this result from the sofic L\"uck aproximation.
		
	\end{rem}

\end{document}